\newtheorem{theorem}{Theorem}
\newtheorem{conjecture}{Conjecture}
\newtheorem{problem}{Problem}
\newtheorem{proposition}{Proposition}
\newcommand{\Z}{\mathbb{Z}}
\newcommand{\Q}{\mathbb{Q}}
\newcommand{\R}{\mathbb{R}}
\begin{document}

\author[]{Vladimir Shpilrain}
\address{Department of Mathematics, The City College of New York, New York,
NY 10031} \email{shpilrain@yahoo.com} 

\title[Average-case complexity of the Whitehead problem]{Average-case complexity of the Whitehead problem\\
for free groups}

\begin{abstract}
The worst-case complexity of group-theoretic algorithms has been studied for a long time. Generic-case complexity, or complexity on random inputs, was introduced and studied relatively recently. In this paper, we address the average-case complexity (i.e., the expected runtime) of algorithms that solve a well-known problem, the Whitehead problem in a free group, which is: given two elements of a free group, find out whether there is an automorphism that takes one element to the other. First we address a special case of the Whitehead problem, namely deciding if a given element of a free group is part of a free basis. We show that there is an algorithm that, on a cyclically reduced input word, solves this problem and has constant (with respect to the length of the input) average-case complexity.  For the general Whitehead problem, we show that the classical Whitehead algorithm has linear average-case complexity if the rank of the free group is 2. We argue that the same should be true in a free group of any rank but point out obstacles to establishing this general result.

\end{abstract}


\maketitle

\section{Introduction}

The worst-case complexity of group-theoretic algorithms has been studied probably since small cancellation groups were introduced and it was noticed that the word problem in these groups admits a linear time (with respect to the ``length" of an input) solution, see e.g. \cite{L-S}.

Genericity of group-theoretic properties and generic-case complexity of group-theoretic algorithms were  introduced more recently, see \cite{AO} and \cite{generic}. Average-case complexity of the word and subgroup membership problems in some (infinite) groups was addressed in \cite{average}.

Studying average-case (as well as generic-case) complexity requires introducing some kind of measure on the group in question. One of the most natural ways of doing that is using {\it asymptotic density}. Let $G$ be a given group, $U \subseteq G$ a subset of $G$, and let $B_n$ be the ball of radius $n$ in the Cayley graph of $G$, i.e., the set of elements of $G$ that can be represented by words of length $\le n$ in the generators of $G$.
Then the {\it asymptotic density} of $U$ is $\limsup_{n \to \infty} \frac{|U \cap B_n|}{|B_n|}$.
We note, in passing, that this definition depends on a set of generators of the group $G$, so a generating set has to be fixed up front.

A set $U \subseteq G$ is called {\it negligible} in $G$ if its asymptotic density is 0 and {\it generic} in $G$ if its asymptotic density is 1. If convergence to 0 (respectively, to 1) is exponentially fast, then the set $U$ is called  {\it exponentially negligible} (respectively, {\it exponentially generic}). For some ``smooth" groups (like a free group, for example), one can use the {\it sphere} $S_n$ in the Cayley graph of $G$, instead of the ball, in the above definitions; this usually does not change the property of being negligible or generic but makes computations easier. Depending on a particular algorithmic problem at hand, it may be more natural to use other stratifications of the Cayley graph; for example, in \cite{Rivin}, {\it annuli} were used instead of spheres or balls, and the corresponding {\it annular density} was used in place of asymptotic density as defined above.

In this paper, we will use stratification by the spheres $S_n$ in the Cayley graph of a free group (with respect to a fixed generating set). Having defined the measure on a free group accordingly, we can now address the average-case time complexity (= the expected runtime) of algorithms whose inputs are elements of a free group, i.e., freely reduced words in the generators and their inverses.

A particular algorithmic problem that we focus on in this paper is the {\it Whitehead problem} for a free group.
The problem is: given two elements, $u$ and $v$, of a free group $F$, find out whether there is an automorphism of $F$ that takes $u$ to $v$. The generic-case complexity of the {\it Whitehead algorithm} for solving this problem is known to be linear \cite{White}, but the worst-case complexity is unknown. It is not even known whether it is bounded by a polynomial in $\max(|u|, |v|)$ or not  (cf. \cite{Problems}, Problem (F25)).

In Section \ref{orbits} of this paper, we treat a special case of this problem where $v$ is a free generator of $F$. What makes this case interesting is that, as it turns out, there is an algorithm for solving this problem whose expected runtime is {\it constant} with respect to the length $n=|u|$ of an input, see Theorem \ref{average-case} in Section \ref{orbits}. Sublinear-time algorithms represent a rather interesting class of algorithms: they can give some kind of an answer without reading the whole input but only a small part thereof. We refer to \cite{sublinear} for more details; here we just say that it is very rare to have a  decision algorithm that would give an exhaustive answer in sublinear time, let alone in constant time. Typically, it is either ``yes" or ``no" but not both. One well-known exception is deciding divisibility of a decimal integer by 2, 5, or 10: this is done by reading just the last digit. This is a rare instance where both ``yes" and ``no" answers can be given in sublinear (in fact, in constant) time. Less trivial examples, namely deciding subgroup membership in some special subgroups of $SL_2(\Z)$, were given in \cite{SL2} based on ideas from \cite{Sanov}.

\subsection{Las Vegas algorithms} \label{Vegas}

Our general method is described in this subsection.
We call an algorithm $\mathcal{A}$ {\it honest} if it always terminates in finite time and gives a correct result.

Then, a {\it Las Vegas algorithm} is a randomized algorithm that never gives an incorrect result; that is, it either  produces the correct result or it informs about the failure to obtain any result.

In contrast, a {\it Monte Carlo algorithm} is a randomized algorithm whose output may be incorrect with some (typically small) probability.

Las Vegas algorithms are more useful for our purposes because they can improve the time complexity of honest, ``hard-working", algorithms that always give a correct answer but are slow. Specifically, by running a fast
Las Vegas algorithm and a slow ``honest" algorithm in parallel, one often gets an algorithm that always terminates with a correct answer and whose average-case complexity is somewhere in between. This idea was used in \cite{average} where it was shown, in particular, that if a group $G$ has the word problem solvable in subexponential time and if $G$ has a non-amenable factor group where the word problem is solvable in a complexity class $\mathcal{C}$, then there is an honest algorithm that solves the word problem in $G$ with average-case complexity in $\mathcal{C}$.
\medskip

The arrangement of the paper is as follows. In Section \ref{orbits}, we show that primitivity of a cyclically reduced element $u \in F$ can be decided, on average, in constant time with respect to the length $|u|$ of the input word $u$. If one uses the ``Deque" (double-ended queue) model of computing \cite{deque}, then the ``cyclically reduced" condition on the input can be dropped.  In Section \ref{general}, we address the average-case complexity (i.e., the expected runtime) of the Whitehead algorithm in general, i.e., for an arbitrary input $(u, v) \in F_r \times F_r$. We show that in $F_2$, the average-case complexity is linear in $\max(|u|, |v|)$. For $r>2$, we show that the same is true, conditioned on a plausible conjecture about automorphic orbits, which is, however, still open. In the concluding Section \ref{blocking}, we offer a couple of interesting open problems about possible subwords of cyclically reduced  words in automorphic orbits. These problems, if answered, could yield a different approach to evaluating the average-case complexity of the Whitehead algorithm.

\section{Recognizing primitive elements of a free group} \label{orbits}

Let $F_r$ be a free group with a free generating set $x_1, \ldots, x_r$ and let $w=w(x_1, \ldots, x_r)$.
Call an element $u \in F_r$ {\it primitive} if there is an automorphism of $F_r$ that takes $x_1$ to $u$.

We will also need the definition of the Whitehead graph of an element $w \in F_r$. The Whitehead graph $Wh(w)$ of $w$ has $2r$  vertices that correspond to $x_1, \ldots, x_r, x_1^{-1}, \ldots, x_r^{-1}$. For  each occurrence of a subword  $x_i x_j$ in the  word $w \in F_r$, there
is an  edge in $Wh(w)$ that connects the vertex $x_i$ to the vertex
$x_j^{-1}$; ~if $w$ has a subword  $x_i x_j^{-1}$, then there is an
edge connecting $x_i$ to $x_j$, etc. ~There is one more edge (the
external edge): this is the edge that connects the vertex corresponding to the last
letter of $w$ to the vertex corresponding to the inverse of the
first letter.

It was observed by Whitehead himself (see \cite{Wh} or \cite{L-S}) that the Whitehead graph
of any cyclically reduced primitive element $w$ of length $>2$ has either an isolated edge or a cut vertex, i.e., a vertex that, having been removed from the graph together with all incident
edges, increases the number of connected components of the graph. A short and elementary proof of this result was recently given in \cite{Heusener}.

Now call a group word $w=w(x_1, \ldots, x_r)$ {\it primitivity-blocking} if it cannot be a subword of any cyclically reduced primitive element of $F_r$. For example, if the Whitehead graph of $w$ (without the external edge) is complete (i.e., any two vertices are connected by at least one edge), then $w$ is primitivity-blocking because in this case, if $w$ is a subword of $u$, then the Whitehead graph of $u$, too, is complete and therefore does not have a cut vertex or an isolated edge.

Examples of primitivity-blocking words are: $x_1^n x_2^n \cdots x_r^n x_1$ (for any $n \ge 2$), $[x_1, x_2][x_3, x_4]\cdots [x_{n-1}, x_{n}]x_1^{-1}$ (for an even $n$), etc. Here $[x, y]$ denotes $x^{-1}y^{-1}xy$. 

The usual algorithm for deciding whether or not a given element of $F_r$ is primitive is a special case of the general Whitehead algorithm that decides, given two elements $u, v \in F_r$,  whether or not $u$ can be taken to $v$ by an automorphism of $F_r$. If $v=x_1$, the worst-case complexity of the Whitehead algorithm is at most quadratic in $\max(|u|, |v|)=|u|$.

The generic-case complexity of the Whitehead algorithm was shown to be linear in \cite{White}.
Here we are going to address the average-case complexity of the Whitehead algorithm run in parallel with a ``fast check" algorithm, when applied to recognizing primitive elements of $F_r$.

A ``fast check" algorithm  $\mathcal{T}$ to test primitivity of an input (cyclically reduced) word $u$ would be as follows. Let $n$ be the length of $u$.  The algorithm  $\mathcal{T}$ would read the initial segments of $u$ of length $k$, $k=1, 2, \ldots,$ adding one letter at a time, and build the Whitehead graph of this segment, excluding the external edge. Then the algorithm would check if this graph is complete. If it is complete, then in particular it does not have a cut vertex or an isolated edge, so the input element is not primitive.

Note that the Whitehead graph always has $2r$ vertices, so checking the property of such a graph to be complete takes constant time with respect to the length of $u$, although reading a segment of $u$ of length $k$ takes time $O(k)$. If the Whitehead graph of $u$ is complete, the algorithm returns ``$u$ is not primitive". If it is not, the algorithm just stops.

Denote the ``usual" Whitehead algorithm by $\mathcal{W}$.
Now we are going to run the algorithms $\mathcal{T}$ and  $\mathcal{W}$ in parallel; denote the composite algorithm by $\mathcal{A}$. Then we have:

\begin{theorem}\label{average-case}
Suppose possible inputs of the above algorithm $\mathcal{A}$ are cyclically reduced words that are selected uniformly at random from the set of cyclically reduced words of length $n$.  Then the average-case time complexity (a.k.a expected runtime) of the algorithm $\mathcal{A}$, working on a classical Turing machine, is $O(1)$, a constant that depends on $r$ but not on $n$.
If one uses the ``Deque" (double-ended queue) model of computing \cite{deque} instead of a classical Turing machine, then the ``cyclically reduced" condition on the input can be dropped. 
\end{theorem}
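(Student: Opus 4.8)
The plan is to show that the "fast check" algorithm $\mathcal{T}$ already terminates with the answer "$u$ is not primitive" on an exponentially generic set of inputs, and to do so after reading only an initial segment of bounded expected length; then the composite algorithm $\mathcal{A}$, which halts as soon as either of $\mathcal{T}$ or $\mathcal{W}$ halts, inherits constant expected runtime because the slow algorithm $\mathcal{W}$ (whose worst-case runtime is polynomial, hence at most $O(n^2)$) contributes only on an exponentially small set. Concretely, I would fix a constant $L=L(r)$ and estimate, for a uniformly random cyclically reduced word $u$ of length $n\ge L$, the probability that the Whitehead graph (without external edge) of the length-$L$ prefix of $u$ is \emph{not} complete. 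A complete graph on the $2r$ vertices $x_1^{\pm1},\dots,x_r^{\pm1}$ needs only finitely many distinct length-two subwords to be present (one for each of the $\binom{2r}{2}$ edges), so this is a question about which two-letter patterns occur among the first $L$ letters of a random reduced word. Since in a reduced word each new letter is chosen essentially uniformly among the $2r-1$ letters that do not cancel the previous one, the first $L$ letters behave like a Markov chain, and standard coupon-collector / Markov-mixing estimates show that the probability of missing \emph{any} fixed required edge after $L$ steps decays like $c^{L}$ for some $c=c(r)<1$. Hence $P(\mathcal{T} \text{ does not halt after reading } L \text{ letters})\le C\,c^{L}$.

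The expected runtime is then bounded by summing, over the halting time, the cost incurred. If $\mathcal{T}$ halts after reading $k$ letters it has cost $O(k)$ (reading the prefix and, at each of the $k$ stages, a constant-time completeness check on a $2r$-vertex graph). So the contribution of the event "$\mathcal{T}$ halts at stage $k$" is $O(k)\cdot P(\text{halt at stage }k)$, and since $P(\mathcal{T}\text{ not yet halted at stage }k)\le C c^{k}$, the sum $\sum_{k} O(k)\,C c^{k}$ converges to a constant depending only on $r$. On the exponentially small remaining event — where $\mathcal{T}$ never detects completeness within the whole word — we fall back on $\mathcal{W}$, which terminates in time $O(n^{2})$; but this event has probability at most $Cc^{n}$, and $n^{2}\cdot Cc^{n}\to 0$, contributing a vanishing (in particular bounded) amount to the expectation. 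Adding the two bounded contributions gives expected runtime $O(1)$ with the constant depending on $r$ alone, which is the first assertion.

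For the second assertion, dropping the "cyclically reduced" hypothesis: a uniformly random \emph{freely reduced} word of length $n$ is cyclically reduced with probability bounded away from $0$ (indeed tending to $1-\frac1{2r-1}$ or so), but more to the point the obstruction to running $\mathcal{T}$ on a not-necessarily-cyclically-reduced word is only the external edge, which connects the last letter to the inverse of the first; in the Deque model one can read from \emph{both} ends of the input simultaneously, cyclically reduce the word in time proportional to the number of cancelling end-pairs, and only then feed the cyclically reduced core to $\mathcal{T}$. The same exponential-decay estimate applies to the core (its length is $n$ minus an amount with bounded expectation), and the extra bookkeeping to peel off cancelling pairs from the two ends is absorbed into the constant.

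The step I expect to be the main obstacle is making the exponential bound $P(\text{edge }e\text{ missing after }L\text{ letters})\le c^{L}$ genuinely uniform over all $\binom{2r}{2}$ required edges and clean as a function of $r$: one must verify that the two-letter Markov chain on reduced words really does hit every admissible pair with positive, $r$-independent-per-step probability (the delicate pairs are those like $x_ix_i^{-1}$ that can never appear in a reduced word — these are exactly the non-edges, so a "complete" Whitehead graph here means complete on the \emph{admissible} pairs, and I should state $\mathcal{T}$'s completeness check accordingly), and that the coupon-collector union bound over a number of edges growing with $r$ is still swamped by the geometric decay once $L=L(r)$ is chosen large enough. Handling this correctly — and confirming that "the Whitehead graph of a prefix is complete" is the right, non-vacuous certificate of non-primitivity in the sense of the primitivity-blocking discussion above — is where the real care is needed; the rest is routine summation of a geometric series.
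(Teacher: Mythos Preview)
Your proposal is correct and follows essentially the same approach as the paper: bound the expected runtime of $\mathcal{T}$ by a convergent series $\sum_k k\,s^k$ coming from the exponential decay of the probability that the length-$k$ prefix has incomplete Whitehead graph, then absorb the $O(n^2)$ cost of $\mathcal{W}$ against the $O(s^n)$ probability of ever needing it; for the Deque part, cyclically reduce from both ends in expected constant time. The only substantive difference is that the paper obtains the exponential decay by invoking a black-box result of Ceccherini-Silberstein and Woess on the growth of languages with forbidden subwords, whereas you sketch a direct coupon-collector/Markov-chain argument for the same bound---your route is more self-contained, the paper's is shorter. Your caveat about which two-letter patterns are admissible is well placed but easily resolved: the subword $ab$ contributes the edge $a$--$b^{-1}$, so the forbidden patterns $x_i x_i^{-1}$ correspond to loops, not to edges of the complete graph, and every genuine edge is realized by some reduced two-letter word.
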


\begin{proof}

Suppose first that the input word $u$ is cyclically reduced. 

\medskip

\noindent {\bf 1.} First we address complexity of the algorithm $\mathcal{T}$. Here we use
a result of \cite{languages} saying that the number of (freely reduced) words of length $L$ with (any number of) forbidden subwords  grows exponentially slower than the number of all freely reduced words of length $L$.

In our situation, if the Whitehead graph of a word $w$ is not complete, that means $w$ does not have at least one $x_i^{\pm 1}x_j^{\pm 1}$ as a subword. Thus, if the Whitehead graph of any initial segment of the input word $u$ is not complete, we have at least one forbidden subword. Therefore, the probability that the Whitehead graph of the initial segment of length $k$ of the word $u$ is not complete is $O(s^k)$ for some $s, ~0<s<1$. Thus, the average time complexity of the algorithm $\mathcal{T}$ is

\begin{equation}\label{T}
\sum_{k=1}^n k\cdot s^k,
\end{equation}

\noindent which is bounded by a constant.

In this case, the algorithm $\mathcal{T}$ terminates in time $O(f(n))$  and tells us that $u$ is not primitive. The expected runtime of the algorithm $\mathcal{A}$ in this case equals the expected runtime of the algorithm $\mathcal{T}$ and is therefore $O(f(n))$.

\medskip

\noindent {\bf 2.} Now suppose that the Whitehead graph of the input word $u$ of length $n$ is not complete, so that we have to employ the Whitehead algorithm $\mathcal{W}$. The probability of this to happen is $O(s^n)$ for some $s, ~0<s<1$, as was mentioned before.

Then, the worst-case complexity of the Whitehead algorithm for detecting primitivity of the input word is known to have time complexity $O(n^2)$.

Thus, the average-case complexity of the composite algorithm $\mathcal{A}$ is

\begin{equation}\label{A}
\sum_{k=1}^n k\cdot s^k + O(n^2) \cdot O(s^n),
\end{equation}

\noindent which is bounded by a constant.

\medskip

\noindent {\bf 3.} Now suppose the input word $u$ is not cyclically reduced. Then we are going to cyclically reduce it.
This cannot be done in constant (or even sublinear) time on a classical Turing machine, so here we are going to use the ``Deque" (double-ended queue) model of computing \cite{deque}. It allows one to move between the first and last letter of a word in constant time.

First, recall that the number of freely reduced words of length $n$ in $F_r$ is $2r(2r-1)^{n-1}$. We also note, in passing, that $u$ is primitive if and only if any conjugate of $u$ is primitive.

The following algorithm, that we denote by $\mathcal{B}$, will cyclically reduce $u$ on average in
constant time with respect to $n=|u|$.

This algorithm will compare the first letter of $u$, call it $a$, to the last letter, call it $z$. If $z \ne a^{-1}$, the algorithm stops right away. If $z = a^{-1}$, the first and last letters are deleted, and the algorithm now works with this new word.

The probability of $z = a^{-1}$ is $\frac{1}{2r}$ for any freely reduced word whose letters were selected uniformly at random from the set  $\{x_1, \ldots, x_r, x_1^{-1}, \ldots, x_r^{-1}\}$. At the next step of the algorithm, however, the letter immediately following $a$ cannot be equal to $a^{-1}$ if we assume that the input is a freely reduced word, so at the next steps (if any) of the algorithm $\mathcal{B}$ the probability of the last letter being equal to the inverse of the first letter will be $\frac{1}{2r-1}$. Then the expected runtime of the algorithm $\mathcal{B}$ on an input word of length $n$ is:

$$\sum_{k=1}^{\frac{n}{2}} \frac{1}{2r} (\frac{1}{2r-1})^{k-1} \cdot k < \sum_{k=1}^\infty (\frac{1}{2r-1})^k \cdot k.$$

\noindent The infinite sum on the right is known to be equal to $\frac{2r-1}{(2r-2)^2}$; in particular, it is constant with respect to $n$.

\end{proof}



\section{The general Whitehead algorithm} \label{general}

Recall that $F_r$ denotes a free group of rank $r$. We start by recalling briefly how the classical Whitehead algorithm works. There is a finite set $E$ of {\it elementary Whitehead automorphisms}; the cardinality of $E$ depends only on $r$. The Whitehead algorithm is in two parts. It takes two freely reduced words $u, v \in F_r$ as an input. Then, by applying automorphisms from the set $E$, it reduces both words to words of minimum   lengths in the corresponding automorphic orbits. This is a greedy algorithm since it was proved by Whitehead that applying automorphisms from $E$ can be arranged so that the length of a word is reduced at every step (the famous ``peak reduction"). The worst-case complexity of this part of the Whitehead algorithm is quadratic with respect to $\max(|u|, |v|)$.

If the lengths of the reduced words $\bar u, \bar v$ are different, then $u$ and $v$ are not in the same automorphic orbit. If the lengths are equal, the second part of the Whitehead algorithm is applied.  The procedure outlined in the original paper by Whitehead suggested this part of the algorithm to be of superexponential time with respect to $|\bar u|=|\bar v|$. However, a standard trick in graph theory (see \cite{MS}) shows that there is an algorithm of at most exponential time. In fact, it was shown in \cite{MS} (Proposition 3.1) that if $N$ is the number of automorphic images of $u \in F_r$ that have the same length as $u$ does, then given an element $v$ of length $|u|$, one can decide, in time linear in $N$, whether or not $v$ is an automorphic image of $u$. Thus, everything boils down to computing (or estimating) the number of automorphic images of $\bar u$ that have the same length as $\bar u$ does. This is still an open problem, see below.

However, if $r=2$, the worst-case complexity of the Whitehead algorithm is known to be polynomial-time in $\max(|u|, |v|)$ \cite{MS} (in fact, quadratic-time \cite{Khan}), and this, combined with results in \cite{White}, makes it easy to establish the following

\begin{proposition} \label{rank2}
The average-case time complexity of the Whitehead algorithm in $F_2$ is linear.
\end{proposition}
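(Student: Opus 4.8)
The plan is to mimic the structure of the proof of Theorem \ref{average-case}, but using the worst-case polynomial bound for the full Whitehead algorithm in $F_2$ (rather than the quadratic bound for the primitivity special case) together with the linear generic-case bound from \cite{White}. The key point is that the generic-case analysis of \cite{White} does more than assert a linear runtime on a generic set: it exhibits an explicit \emph{exponentially generic} set of inputs on which the first part of the Whitehead algorithm already succeeds in linear time, and on the complement (an exponentially negligible set) we can afford to pay the polynomial worst-case cost.

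First I would fix the stratification: an input is a pair $(u,v) \in F_2 \times F_2$, and I would stratify by $n = \max(|u|,|v|)$, taking $u$ and $v$ uniformly at random among freely reduced words of length $\le n$ (the sphere-versus-ball distinction is immaterial here, by the ``smoothness'' remark in the introduction). Next I would invoke \cite{White}: there is a constant $s$ with $0 < s < 1$ and a set $G_n$ of pairs of length $\le n$ with $|G_n|/|B_n \times B_n| \ge 1 - O(s^n)$, such that on every input in $G_n$ the Whitehead algorithm terminates in time $O(n)$. (Concretely, the generic behavior is that both $u$ and $v$ are already Whitehead-minimal — or become so after one linear-time sweep — and have distinct lengths or an easily checked non-equivalence, so the exponential second part is never entered.) Then I would split the expected runtime into two pieces exactly as in \eqref{A}:
\begin{equation*}
\mathbb{E}[\text{runtime}] \;=\; \sum_{(u,v)\in G_n} \Pr[(u,v)] \cdot O(n) \;+\; \sum_{(u,v)\notin G_n} \Pr[(u,v)] \cdot T_{\mathrm{worst}}(n),
\end{equation*}
where $T_{\mathrm{worst}}(n)$ is the worst-case complexity of the Whitehead algorithm in $F_2$, which is polynomial — in fact $O(n^2)$ — by \cite{MS, Khan}. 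The first sum is at most $O(n)$ since the probabilities sum to at most $1$; the second sum is bounded by $O(s^n) \cdot O(n^2)$, which tends to $0$ and is in particular $O(n)$. Hence the average-case complexity is $O(n)$, i.e.\ linear, as claimed. A matching lower bound of $\Omega(n)$ is trivial because the algorithm must read its input.

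The main obstacle — and the reason this works cleanly in rank $2$ but not in higher rank — is the need for a \emph{polynomial} (not merely exponential) worst-case bound on the second factor in the negligible term: an exponentially small probability multiplied by a superpolynomial, let alone exponential, worst-case runtime need not be bounded. In $F_2$ this is supplied by \cite{MS} and \cite{Khan}; in $F_r$ for $r > 2$ the worst-case complexity of the second part of the Whitehead algorithm is governed by the (still unknown) size of the set of equal-length automorphic images of $\bar u$, which is exactly the open problem flagged above. A secondary, more technical point is to make sure the generic set $G_n$ from \cite{White} is genuinely exponentially generic as a subset of pairs (the cited result is stated for a single input $u$, but closing under the same argument for the pair, or taking the intersection of the two single-variable generic sets, suffices). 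Once both ingredients are in place, the bookkeeping is routine and identical in spirit to Part 2 of the proof of Theorem \ref{average-case}.
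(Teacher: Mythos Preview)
Your approach is correct and essentially the paper's own: Proposition~\ref{rank2} is obtained there as the $r=2$ case of Proposition~\ref{conditional}, whose proof is exactly your decomposition into an exponentially generic linear-time case (via the strictly-minimal set $SM$ of \cite{White}) plus an $O(\exp(-n))\cdot\text{poly}(n)$ contribution on the complement, using the polynomial bound on the number of minimal-length orbit elements in $F_2$ from \cite{Khan}. The one detail the paper spells out that you leave as ``an easily checked non-equivalence'' is the equal-length case with both $u,v\in SM$: there the linear-time test is whether $v$ is a cyclic permutation of $u$ up to a permutation of $X^{\pm 1}$, carried out via Knuth--Morris--Pratt pattern matching on $u^2$.
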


Proposition \ref{rank2} will follow from Proposition \ref{conditional} below.
For arbitrary $r>2$, the claim of Proposition \ref{rank2} is also true conditioned on the validity of  Conjecture 1 below.

\begin{conjecture}\label{conj1}
Let the length of $w \in F_r$ be irreducible by any elementary Whitehead automorphism (in particular, $w$ is cyclically reduced). Then the number of elements in the automorphic orbit of $w$ that have the same length as $w$ does is subexponential in $|w|$.
\end{conjecture}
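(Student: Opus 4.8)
The plan is to recast the conjecture as a statement about the growth of a single bounded-degree graph, and then to control that growth by an ``invariant-and-fiber'' decomposition. Write $n=|w|$ and let $M_n(w)$ be the set of length-$n$ (hence minimal-length) cyclically reduced words in the automorphic orbit of $w$; this is exactly the quantity $N$ of \cite{MS} whose subexponential growth is at stake. By Whitehead's peak-reduction theory (see \cite{L-S}), any two minimal-length representatives are joined by a path of elementary Whitehead automorphisms that preserves length at every step, so $M_n(w)$ is the vertex set of a connected graph $\Gamma_w$ in which $u$ and $u'$ are adjacent when some $\phi\in E$ carries $u$ to $u'$ without changing length. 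Since $|E|$ depends only on $r$, the graph $\Gamma_w$ has degree bounded by a constant $C=C(r)$, and the conjecture is equivalent to the assertion that $|\Gamma_w|=2^{o(n)}$.

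Two cheap estimates are available, and I would first record why both are insufficient, since this pinpoints the real work. The global bound $|M_n(w)|\le 2r(2r-1)^{n-1}$ is exponential, and the degree bound only gives $|\Gamma_w|\le 1+C+\cdots+C^{D}$, where $D$ is the diameter of $\Gamma_w$; since $D$ is in general linear in $n$, this too is exponential. Even the sharper forbidden-subword counting of \cite{languages}, which would apply if minimal representatives were forced to avoid a fixed finite list of subwords, yields only a bound $c^n$ with base $c<2r-1$ --- smaller, but still exponential, and hence not enough. Thus any proof must exploit that the branching in $\Gamma_w$ is almost always trivial, i.e.\ that the breadth-first layers of $\Gamma_w$ grow subexponentially; this slow growth is the genuine content of the conjecture.

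To produce such slow growth I would map $M_n(w)$ into a polynomial-size target and bound the fibers. The first, unconditional, ingredient is abelianization $\mathrm{ab}\colon M_n(w)\to\Z^r$: every image lies in the single $\mathrm{GL}_r(\Z)$-orbit of $\mathrm{ab}(w)$ intersected with the $\ell_1$-ball of radius $n$, and the number of lattice points of a fixed $\mathrm{GL}_r(\Z)$-orbit in that ball is only $O(n^r)$. Hence $\mathrm{ab}(M_n(w))$ has polynomial size, and it remains to bound, for each admissible vector $a$, the number of length-$n$ Whitehead-minimal words $u$ in the orbit of $w$ with $\mathrm{ab}(u)=a$. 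The hoped-for mechanism is that being simultaneously of minimal length in a fixed orbit and of prescribed abelianization is restrictive enough to force the fiber down to $2^{o(n)}$; I would attempt to see this through a finer, length-sensitive invariant (for instance the weighted Whitehead graph, or the vector of frequencies of all subwords of some slowly growing length $\ell=\ell(n)$), showing that along edges of $\Gamma_w$ such data change only slightly and take subexponentially many values overall.

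The main obstacle is precisely this last fiber bound: the number of all length-$n$ words with a fixed abelianization is already exponential, so abelianization controls nothing inside a fiber, and no presently known invariant is at once coarse enough to take subexponentially many values and fine enough to have subexponential fibers on Whitehead-minimal words. This is where rank $2$ is genuinely special: there $\mathrm{Out}(F_2)\cong\mathrm{GL}_2(\Z)$ acts on the Farey tessellation, the orbit of a conjugacy class is governed by the geometry of a single curve on a once-punctured torus, and the minimal set is provably small, which underlies the quadratic bound of \cite{Khan,MS}. For $r\ge 3$ the natural replacement is the action of $\mathrm{Out}(F_r)$ on Outer space or the free factor complex, and one would try to bound the growth of $\Gamma_w$ through the displacement geometry of this action; but translating ``minimal-length representative'' into a subexponential count in that setting is exactly where the argument stalls, and is the reason the statement is offered here as a conjecture rather than a theorem.
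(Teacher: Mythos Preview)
The statement you are addressing is labelled \emph{Conjecture} in the paper, and the paper offers no proof of it; it is stated precisely as an open problem, with the remark that it is known only for $r=2$ (via Khan's quadratic bound) and that Lee's results cover an exponentially generic but not exhaustive set of words. Your proposal is not a proof either, and you say so yourself in the final paragraph: the argument ``stalls'' at the fiber bound, and you correctly identify this as the reason the statement remains a conjecture. So there is nothing to compare against a proof in the paper, because there is none.

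As commentary on the difficulty, your write-up is reasonable and honest. The reduction to bounding the graph $\Gamma_w$ is exactly how the paper frames the problem (it calls this graph $\overline{\Gamma(w)}$), and your observation that the bounded-degree/diameter estimate and the forbidden-subword bound are both merely exponential is correct and worth stating. The abelianization step is fine as far as it goes: the image in $\Z^r$ does land in a single $\mathrm{GL}_r(\Z)$-orbit intersected with an $\ell_1$-ball, which is indeed polynomial in $n$. But, as you acknowledge, this buys nothing, because the fibers over a fixed abelianization are themselves exponential in size, and no finer invariant with the required coarse/fine balance is presently known. Your pointer to Outer space and displacement geometry is a plausible direction but is speculation, not an argument.

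In short: there is no gap to name because there is no claimed proof; you have written an informed discussion of why Conjecture~\ref{conj1} is open, which matches the paper's own stance.
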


Given a $w \in F_r$ that is shortest in its  orbit, denote by $\Gamma(w)$ the graph whose vertices are automorphic images of $w$, and two vertices are connected by an edge if and only if one of the corresponding elements of $F_r$ can be obtained from the other by applying a single elementary Whitehead automorphism. Then, denote by $\overline{\Gamma(w)}$ the subgraph of $\Gamma(w)$ that only includes vertices of $\Gamma(w)$ corresponding to words of the same length as $w$. Note that the graph $\overline{\Gamma(w)}$ is connected by a result of Whitehead \cite{Wh}.

Estimating the number of vertices in $\overline{\Gamma(w)}$ is not easy.
D. Lee showed \cite{Lee} that this number  is ``almost always" polynomial in $|w|$. Possible exceptions are words $w$ such that for some letters $x_i, x_j$, the number of occurrences of $x_i$ and  $x_i^{-1}$ (combined) equals the number of occurrences of $x_j$ and  $x_j^{-1}$ (combined). The set of words with this property is negligible in $F_r$, but this alone is not enough to get what we want; one needs a quantitative estimate of the growth rate of an arbitrary automorphic orbit. The automorphic orbit generated by $x_1$ (i.e., the set of all primitive elements of $F_r$) appears to be the ``thickest", i.e., it has the largest growth rate, known to be equal to $2r-3$, see \cite{Puder}. Other automorphic orbits appear to be ``much thinner"; specifically, we believe the following is true:

\begin{conjecture}\label{conj2} (cf. \cite{Puder})
Let $w \in F_r$ be a non-primitive element. Let $CA(w)$ denote the set of cyclically reduced automorphic images of $w$. Then the growth rate of $CA(w)$ is less than $\sqrt{2r-1}$.
\end{conjecture}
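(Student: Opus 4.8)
The plan is to estimate the number $g_n$ of length-$n$ elements of $CA(w)$ --- equivalently, since an automorphic orbit is closed under conjugation, the number of conjugacy classes of cyclic length $n$ that it contains --- and to prove that $\limsup_{n\to\infty} g_n^{1/n}<\sqrt{2r-1}$. Since $CA(w)$ depends only on the automorphic orbit of $w$, I would first replace $w$ by a representative $w_0$ of minimal length in its orbit, so that $w_0$ is cyclically reduced and irreducible by every elementary Whitehead automorphism; this changes neither $CA(w)$ nor its growth rate. The length-$n$ elements of $CA(w)$ are then exactly the cyclically reduced vertices of length $n$ of the connected graph $\Gamma(w_0)$, and I would count them by separating the contribution of the same-length layers (the analogues of $\overline{\Gamma(w_0)}$ at all relevant lengths) from that of the length-increasing, ``inflating'' elementary Whitehead automorphisms that carry a minimal element up to length $n$. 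It is worth noting that the bound is substantive only for $r\ge 3$, since $2r-3>\sqrt{2r-1}$ there while $2r-3=1$ when $r=2$; and even for $r\ge 3$ it is presumably not tight, as the orbit of $x_1^2$, which consists of the squares of primitive elements, already grows at rate $\sqrt{2r-3}$.

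For the same-length layers I would invoke D. Lee's theorem \cite{Lee}: outside the negligible set of \emph{balanced} words --- those for which two distinct generators $x_i,x_j$ have equal combined numbers of occurrences of $x_i^{\pm1}$ and of $x_j^{\pm1}$ --- the number of vertices of $\overline{\Gamma(w_0)}$ is bounded by a polynomial in $|w_0|$, so that in a transfer-matrix description the bottom layer enters only as a polynomial prefactor and the exponential growth rate of $CA(w)$ should coincide with the Perron eigenvalue of the matrix governing the inflation process. For that process the model to imitate is Puder's analysis of the primitive orbit \cite{Puder}, where the analogous count grows at rate $2r-3$: one follows the effect of inflating Whitehead automorphisms on Whitehead graphs, or on the associated Stallings core graphs, and reads off the rate as a Perron eigenvalue. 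The feature to exploit is that non-primitivity forces extra edges into these graphs --- a minimal cyclically reduced element of a non-primitive orbit has, by Whitehead's lemma, neither a cut vertex nor an isolated edge in its Whitehead graph, in contrast with a primitive word of length $>2$ --- so fewer inflating moves are available and the relevant transfer matrix ought to have strictly smaller Perron eigenvalue than $2r-3$; the aim is to push the estimate below $\sqrt{2r-1}$. Puder's primitivity rank $\pi(w)$, which is finite (and at most $r$) exactly when $w$ is non-primitive, would be the natural parameter in which to make ``extra edges are forced'' quantitative.

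The main obstacle is precisely the balanced words excluded by Lee's dichotomy, together with the need for an explicit exponential rate --- not merely the qualitative control of same-length layers supplied by Lee's theorem and Conjecture 1 --- that is uniform over all non-primitive $w$. On the balanced words Lee's polynomial bound genuinely fails, and at present no technique yields a clean exponential growth rate, let alone the value $\sqrt{2r-1}$, for their orbits; words such as $x_1^k$, $[x_1,x_2]$, and more symmetric commutator-type elements are the decisive test cases, and for these the inflation count has to be carried out directly, with the additional difficulty that the stabilizer of $w$ in $\mathrm{Aut}(F_r)$ is large and interacts with the set of inflating automorphisms in a way that is hard to control. A complete proof would thus require either an extension of Lee's orbit-layer estimate to the balanced words, or a direct Perron-type bound on the inflation process that does not route through Lee's dichotomy at all; in the absence of such an ingredient, the statement is left as a conjecture.
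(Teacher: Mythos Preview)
The statement you are addressing is labeled a \emph{conjecture} in the paper, and the paper offers no proof of it; it only remarks that Conjecture~2 is weaker than Conjecture~1 and records the related conjectures PW1 and PW2 from \cite{Puder}. Your write-up is consistent with this: it is not a proof but a strategic outline that identifies a plausible mechanism (polynomial control of the same-length layers via Lee's theorem together with a Perron-type bound on the length-increasing Whitehead moves, by analogy with the primitive case in \cite{Puder}) and then names the obstruction (the balanced words excluded by Lee's dichotomy, and the absence of a uniform explicit exponential rate) before concluding that ``the statement is left as a conjecture.'' There is therefore nothing in the paper to compare your argument against, and your own closing sentence is the accurate status report: the ingredient that would turn the outline into a proof is not available.

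One small correction to your heuristics: the assertion that the orbit of $x_1^2$ ``grows at rate $\sqrt{2r-3}$'' is itself the content of the conjecture PW2 recorded in the paper (taking $\mu(x_1^2)=2$), not an established fact, so it should not be cited as supporting evidence.
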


Conjecture \ref{conj2} is obviously weaker than Conjecture \ref{conj1} is. We also mention here two related conjectures from
\cite{Puder}:

\medskip

\noindent {\bf PW1.} (See also \cite{Problems}, Problem (F41)). Let $w \in F_r$ be a non-primitive element. Then the growth rate of the orbit $Orb(w)=\{\varphi(w), ~\varphi \in Aut(F_r)\}$ is at most $\sqrt{2r-1}$. Note that $\sqrt{2r-1}$ is the growth rate of the set of all conjugates of $w$. Thus, this conjecture basically says that ``most" automorphic images of a non-primitive element are its conjugates.
\medskip

\noindent {\bf PW2.} Let $w \in F_r$ and let $\mu(w)$ denote the minimum (positive) number of occurrences of any $x_i$ in $w$.
Then the growth rate of the set of {\it cyclically reduced} words in $Orb(w)$ is $\sqrt[\mu(w)]{2r-3}$.
\medskip

Proving Conjecture 1 would be sufficient to generalize Proposition \ref{rank2} to arbitrary $r\ge 2$:

\begin{proposition}\label{conditional} Assume that Conjecture \ref{conj1} holds in $F_r$. Then there is an algorithm $\mathcal{A}$ that, on input $(u, v)$, where $u, v \in F_r$, decides whether or not  $v=\varphi(u)$ for some $\varphi \in Aut(F_r)$, and the expected runtime of the algorithm $\mathcal{A}$ is at most linear in $\max(|u|, |v|)$.
\end{proposition}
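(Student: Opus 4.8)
The plan is to reduce the statement to two facts about the classical Whitehead algorithm $\mathcal{W}$ — one about its behaviour on generic inputs and one about its worst case — and then average them out exactly as in the proof of Theorem~\ref{average-case}. The algorithm $\mathcal{A}$ will be $\mathcal{W}$ itself (optionally run in parallel with the fast Las~Vegas check $\mathcal{T}$ of Section~\ref{orbits} applied to $u$ and to $v$, which can only speed things up). Recall that $\mathcal{W}$ has two parts: Part~1 (``peak reduction'') rewrites $u$ and $v$ to shortest representatives $\bar u,\bar v$ of their orbits in worst-case time $O(n^2)$, where $n=\max(|u|,|v|)$; Part~2, invoked only when $|\bar u|=|\bar v|$, decides whether $\bar v$ is an automorphic image of $\bar u$ and, by Proposition~3.1 of \cite{MS}, runs in time linear in $N$, the number of automorphic images of $\bar u$ of length $|\bar u|$. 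In particular $\mathcal{A}$ is honest: it always halts with the correct answer.

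The first ingredient is a worst-case bound. Since $\bar u$ is shortest in its orbit, its length cannot be reduced by any elementary Whitehead automorphism, so Conjecture~\ref{conj1} applies to $\bar u$ and gives $N=2^{o(|\bar u|)}$; as $|\bar u|\le n$, this is $2^{o(n)}$. Together with the $O(n^2)$ bound for Part~1, this shows that, \emph{assuming Conjecture~\ref{conj1}}, the worst-case runtime of $\mathcal{A}$ on inputs of length $\le n$ is $2^{o(n)}$.

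The second ingredient is the generic behaviour. Here I would appeal to the generic-case linear bound of \cite{White}; concretely, by the counting result of \cite{languages} used in the proof of Theorem~\ref{average-case}, the Whitehead graphs of a random $u$ and a random $v$ of length $n$ are both complete except with probability $O(s^n)$ for some $0<s<1$, and on that event both words are cyclically reduced and already shortest in their orbits, so Part~1 is vacuous and Part~2 reduces to searching the $O(n)$-element set of cyclic permutations of relabellings of $u$ for $v$ — a linear-time task. (An input that is not cyclically reduced is first cyclically reduced, which, as in the proof of Theorem~\ref{average-case}, costs $O(1)$ expected time in the Deque model or $O(n)$ on a Turing machine and does not change orbit membership.) Averaging over inputs stratified by $n=\max(|u|,|v|)$ then gives
\begin{equation*}
E\bigl[\text{runtime of }\mathcal{A}\bigr]\ \le\ \bigl(1-O(s^n)\bigr)\cdot O(n)\ +\ O(s^n)\cdot 2^{o(n)}\ =\ O(n)+2^{o(n)-\Omega(n)}\ =\ O(n),
\end{equation*}
which is the claim of the Proposition.

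The step I expect to be the genuine obstacle is the worst-case bound of the second paragraph: for $r>2$ there is at present no subexponential upper bound on the number $N$ of same-length automorphic images of $\bar u$, so a priori $N$ could grow at an exponential rate, in which case the last summand $O(s^n)\cdot 2^{O(n)}$ in the displayed estimate need not tend to $0$ — indeed it could blow up. Thus the whole argument hinges on replacing the trivial exponential bound of \cite{MS} for Part~2 by a subexponential one, which is exactly the content of Conjecture~\ref{conj1}; when $r=2$ this is known, the same-length orbit being of polynomial size \cite{MS,Khan}, which is how Proposition~\ref{rank2} is recovered. A secondary, routine point is to make sure the counting estimates of \cite{languages} are applied coordinatewise to a pair stratified by $\max(|u|,|v|)$ rather than to a single random word, and — for a fully self-contained treatment of the generic case in place of citing \cite{White} — to check that a complete Whitehead graph forces the shortest elements of an orbit to be cyclic permutations of relabellings.
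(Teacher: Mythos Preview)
Your overall strategy --- split into an exponentially generic set on which the algorithm runs in linear time, and its exponentially rare complement on which Conjecture~\ref{conj1} bounds the cost subexponentially, then average --- is exactly the paper's approach, and your displayed estimate $(1-O(s^n))\cdot O(n)+O(s^n)\cdot 2^{o(n)}=O(n)$ is the right bookkeeping.

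The genuine gap is in your concrete description of the generic case. You assert that when the Whitehead graphs of $u$ and $v$ are complete, ``both words are \dots\ already shortest in their orbits'' and ``Part~2 reduces to searching the $O(n)$-element set of cyclic permutations of relabellings of $u$ for $v$.'' A complete Whitehead graph only rules out cut vertices and isolated edges; this certifies non-primitivity, but it neither guarantees that the word is Whitehead-minimal nor, more importantly, that it is \emph{strictly minimal} --- i.e.\ that every other minimal-length element of the orbit differs from it only by a relabelling and a cyclic permutation. Without strict minimality there could be many same-length orbit representatives, and then ``Part~2'' is precisely the hard step whose size you are trying to bound via Conjecture~\ref{conj1}; you would be using the conjecture on the generic branch as well as on the rare one, and the linear bound on that branch would collapse. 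Your final paragraph flags this as something ``to check,'' but the statement is false as written.

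The paper closes this gap by replacing ``complete Whitehead graph'' with the set $SM$ of strictly minimal words from \cite{White} (Theorem~A there): $SM$ is exponentially generic, membership in $SM$ is decidable in linear time, and by definition two $SM$ words lie in the same orbit iff one is obtained from the other by a relabelling composed with a cyclic permutation --- the latter checkable in linear time via Knuth--Morris--Pratt applied to $u^2$. If you substitute $SM$ for your Whitehead-graph heuristic, your argument becomes correct and coincides with the paper's; your black-box citation of \cite{White} is in fact pointing at exactly this.
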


\begin{proof}
First we note that Conjectures 1, 2 in this section hold in $F_2$ because in $F_2$, the number of cyclically reduced elements of length $n$ in the same automorphic orbit is bounded by a quadratic function of $n$, see \cite{Khan} or \cite[Section 4.2.3]{KhanThesis}.

Now let $r>2$ be arbitrary, and let $(u, v) \in F_r \times F_r$ be an input. We can assume that both  $u$ and $v$ are cyclically reduced; if not, then cyclically reducing them takes linear time in $\max(|u|, |v|)$ anyway.

Our algorithm $\mathcal{A}$ will consist of three algorithms, $\mathcal{C}$, $\mathcal{R}$, and  $\mathcal{S}$, run in succession. Before we describe these algorithms and compute the expected runtime of the whole algorithm $\mathcal{A}$, we need to make some assumptions on how inputs $(u, v) \in F_r \times F_r$ of complexity $n$ are sampled. Recall that we define complexity of $(u, v)$ as $\max(|u|, |v|)$. Accordingly, we first select one of the two elements, say, $u$ uniformly at random from all elements of $F_r$ having length $n$. After that, we select the other element $v$ uniformly at random from all elements of $F_r$ having length $\le n$.
\medskip

\noindent {\bf 1.} First we employ the algorithm that we denote by $\mathcal{C}$. Following \cite{White}, denote by $SM$ the set of ``strictly minimal" words. These are cyclically reduced words $w$ with the following property: any automorphism of $F_r$ except permutations on the set of generators of $F_r$ and their inverses and those inducing cyclic permutations of $w$, strictly increase the length of $w$.

By \cite[Theorem A]{White}, the set $SM$ is exponentially generic. By the same theorem,
there is a linear time algorithm that tells whether or not a given cyclically reduced word of length $w$ is in the set $SM$. Apply this algorithm to both $u$ and $v$. Suppose first that both $u$ and $v$ are in $SM$.

If $u$ and $v$ are not of the same length, then the algorithm $\mathcal{C}$ concludes that $u$ and $v$ are not in the same automorphic orbit.

If $u$ and $v$ are of the same length, then we have to check if there is a composition of a cyclic permutation of $u$ and a permutation on the set $X^{\pm 1}$ of generators of $F_r$ and their inverses that takes $u$ to $v$. It may seem that this would take quadratic time in $n=|u|=|v|$ because there are $n$ cyclic permutations of $u$, and for each we have to apply permutations on $X^{\pm 1}$ to $u$, which takes linear time in $n$.

However, we first observe that the word $u^2$ contains all cyclic permutations of $u$ as subwords, and any subword of $u^2$ of length $n$ is a cyclic permutation of $u$.
Then, a linear time string-pattern-matching recognition algorithm of Knuth–Morris–Pratt \cite{Knuth} can be applied to check if $v$ is a subword of $u^2$ and therefore is a cyclic permutation of $u$.

Thus, if $u$ and $v$ are both in the set $SM$, then we can decide in time linear in $n=\max(|u|, |v|)$ whether or not they are in the same automorphic orbit.

\medskip

\noindent {\bf 2.} Now suppose at least one of $u$ and $v$ is not in the set $SM$. Let $n=\max(|u|,|v|)=|u|$. If $u$ is in $SM$ and $v$ is not, then $u$ and $v$ are not in the same automorphic orbit.

Now suppose $u$ is not in $SM$. The probability of this to happen is $O(\exp(-n))$ by \cite[Theorem A]{White}.
We then apply the algorithm $\mathcal{R}$ to $u$. This algorithm checks if $|u|$ can be reduced by applying elementary Whitehead automorphisms (there is a constant number of those) to $u$, and if so, the algorithm $\mathcal{C}$ reduces $|u|$ until it cannot be reduced any further. This takes time
$O(n^2)$.

Now suppose both $u$ and $v$ are not in $SM$. Again, the probability of this to happen is $O(\exp(-n))$. If both $|u|$ and $|v|$ can be reduced by elementary Whitehead automorphisms, we
apply the algorithm $\mathcal{R}$ to $u$ and then to $v$ to reduce their length until it cannot be reduced any further. This takes time $O(n^2)$. If the reduced elements $\bar{u}$ and $\bar{v}$ are not of the same length, then the algorithm $\mathcal{R}$ tells that $u$ and $v$  are not in the same automorphic orbit.
If they are of the same length, we proceed to the remaining case.
\medskip


\noindent {\bf 3.} The remaining case is where the length $m=|\bar{u}|=|\bar{v}|$ cannot be reduced by elementary Whitehead automorphisms. Recall that we are currently in the case where both $u$ and $v$ are not in the set $SM$, which happens with probability $O(\exp(-n))$, where $n\ge m$ is the maximum of the lengths of the original input words $u$ and $v$.

Now we apply the algorithm $\mathcal{S}$, the most mysterious one (from the complexity point of view). It takes  $\bar{u}$ as input, builds the graph $\overline{\Gamma(\bar{u})}$ and checks whether or not $\bar{v}$ labels one of the vertices of $\overline{\Gamma(\bar{u})}$. Recall that  $|\bar{u}|=m$.

To complete the proof of Proposition \ref{conditional} (and therefore also of Proposition \ref{rank2}), we need to estimate the runtime of the algorithm $\mathcal{S}$, under the assumption that Conjecture \ref{conj1} holds in $F_r$. Once again, the input for the algorithm $\mathcal{S}$ is a pair of words $\bar{u}$, $\bar{v}$ of the same length $m, ~ 1 < m \le n,$ irreducible by any elementary Whitehead automorphism. 

One can then start building a spanning tree for $\overline{\Gamma(\bar{u})}$ using the breadth-first search method, with the vertex $\bar{u}$ as the root. Since the degree of every vertex of $\overline{\Gamma(\bar{u})}$ is bounded by a function of $r$ only and therefore is constant with respect to $m=|\bar{u}|$, the breadth-first search should terminate in time $O(|\overline{\Gamma(\bar{u})}|)$, where $|\overline{\Gamma(\bar{u})}|$ is the number of vertices in the graph $\overline{\Gamma(\bar{u})}$. Therefore, the expected runtime of the algorithm $\mathcal{S}$ is

$$\displaystyle{O(\exp(-n)) \cdot |\overline{\Gamma(\bar{u})}|)},$$

\noindent which is sublinear in $n=|u|$, assuming that Conjecture \ref{conj1} holds in $F_r$, i.e., assuming that the number of vertices in the graph $\overline{\Gamma(\bar{u})}$ is subexponential in $m=|\bar{u}|$, and therefore also in $n=|u|$.

Thus, summing up the expected runtime of the algorithms $\mathcal{C}$, $\mathcal{R}$, and $\mathcal{S}$, we have:

$$\displaystyle{O(n) + O(\exp(-n)) \cdot O(n^2) + O(\exp(-n)) \cdot |\overline{\Gamma(\bar{u})}|)} = O(n).$$

This completes the proof.

\end{proof}


\section{Orbit-blocking words}\label{blocking}

Another possible approach to generalizing Proposition \ref{rank2} to arbitrary $r>2$ is probably harder but is of independent interest. Let us generalize the definition of primitivity-blocking words to other orbit-blocking words as follows.
Let $u \in F_r$. Consider the orbit $Orb(u)=\{\varphi(u), ~\varphi \in Aut(F_r)\}$. Call $w \in F_r$ an
$Orb(u)$-blocking word if it cannot be a subword of any cyclically reduced $v \in Orb(u)$. We ask:

\begin{problem}\label{exist} (see also \cite{Problems}, Problem (F40)).
Is it true that there are $Orb(u)$-blocking words for any $u \in F_r$ ?
\end{problem}

\begin{problem}\label{output}
Is there an algorithm that, on input $u \in F_r$, would output at least one particular $Orb(u)$-blocking word, assuming there is one?
\end{problem}

A good starting point might be to find an $Orb(u)$-blocking word for $u=[x_1, x_2]$. This is easy to do if $r=2$ since, by a classical result of Nielsen (see e.g. \cite{L-S}), any cyclically reduced $v \in Orb([x_1, x_2])$ in this case is either $[x_1, x_2]$ or $[x_2, x_1]$. Thus, almost every word in this case is $Orb(u)$-blocking. This is only true if $r=2$ though; if $r>2$, the situation is quite different.

\vskip .5cm

\noindent {\it Acknowledgement.} I am grateful to Ilya Kapovich for useful comments and discussions and to Sasha Ushakov for suggesting how to strengthen the original version of Theorem 1 (from ``sublinear" to ``constant" average-case complexity) and how to simplify the original proof of Proposition 2.

\baselineskip 11 pt


\begin{thebibliography}{ABC}


\bibitem{AO}
G.~Arzhantseva and A.~Olshanskii, \emph{Genericity of the class of groups in which subgroups with a lesser number of generators are free,} (Russian) Mat. Zametki \textbf{59} (1996), no. 4, 489--496.

\bibitem{Problems}
G. Baumslag, A. G. Myasnikov, V. Shpilrain, {\it Open problems in
combinatorial group theory},\\
\url{http://shpilrain.ccny.cuny.edu/gworld/problems/oproblems.html}

\bibitem{languages}
T. Ceccherini-Silberstein, W. Woess, {\it  Growth and ergodicity of context-free languages}, Trans. Amer. Math. Soc. {\bf  354} (2002), 4597--4625.

\bibitem{SL2}
A. Chorna, K. Geller, V. Shpilrain, {\it On
two-generator subgroups of $SL_2(\Z)$, $SL_2(\Q)$, and $SL_2(\R)$},
J. Algebra {\bf 478} (2017), 367--381.

\bibitem{deque}
{\it Double-ended queue}, \url{https://en.wikipedia.org/wiki/Double-ended_queue}

\bibitem{Heusener}
M. Heusener, R. Weidmann, {\it A remark on Whitehead's cut-vertex lemma},  J. Group Theory {\bf 22} (2019), 15--21.

\bibitem{generic}
I. Kapovich, A. G. Myasnikov, P. Schupp, V. Shpilrain, {\it  Generic-case complexity, decision problems in group theory and random walks}, J.  Algebra {\bf 264} (2003), 665--694.

\bibitem{average}
I. Kapovich, A. G. Myasnikov, P. Schupp, V. Shpilrain,
{\it  Average-case complexity and decision problems in group theory},
 Adv. Math. {\bf 190} (2005), 343--359.

\bibitem{Rivin}
I. Kapovich, I. Rivin,  P. Schupp, V. Shpilrain,
{\it Densities in free groups and $\mathbb{Z}^k$,
 visible points and test elements}, Math. Res. Lett. {\bf 14}  (2007),  263--284.

\bibitem{White}
I. Kapovich,  P. Schupp, V. Shpilrain, {\it
Generic properties of Whitehead's algorithm and isomorphism rigidity
of random one-relator groups}, Pacific J. Math. {\bf 223} (2006),
113--140.

\bibitem{Khan}
B. Khan, {\it The structure of automorphic conjugacy in the free group of rank 2}, Contemp. Math. Amer. Math. Soc. {\bf  349} (2004), 115--196.

\bibitem{KhanThesis}
B. Khan, {\it The structure of automorphic conjugacy in the free group of rank two.} Thesis (Ph.D.) City University of New York. 2003. 204 pp.

\bibitem{Knuth}
D. Knuth, J. H. Morris, V. Pratt, {\it Fast pattern matching in strings}, SIAM Journal on Computing  {\bf 6} (2) (1977), 323--350.\\
\url{https://en.wikipedia.org/wiki/Knuth\%E2\%80\%93Morris\%E2\%80\%93Pratt_algorithm}

\bibitem{Lee}
D. Lee,  {\it Counting words of minimum length in an automorphic orbit}, J. Algebra {\bf 30} (2006), 35--58.

\bibitem{L-S}
R.~C.~Lyndon and P.~E.~Schupp, \emph{Combinatorial Group
Theory}, Ergebnisse der Mathematik, band 89, Springer 1977.
Reprinted in the Springer Classics in Mathematics series, 2000.

\bibitem{MS}
A. Myasnikov and V. Shpilrain, {\it Automorphic orbits in free groups}, J. Algebra {\bf 269} (2003), 18--27.

\bibitem{Puder}
D. Puder,  C. Wu, {\it Growth of primitive elements in free
groups},  J. London Math. Soc. {\bf 90} (2014), 89--104.

\bibitem{Sanov}
I. N. Sanov, {\it A property of a representation of a free group}
(Russian), Doklady Akad. Nauk SSSR (N. S.)  {\bf 57} (1947),
657--659.


\bibitem{sublinear}
V. Shpilrain, {\it Sublinear time algorithms in
the theory of groups and semigroups}, Illinois J. Math. {\bf 54}
(2011), 187--197.


\bibitem{Wh}
J.~H.~C.~Whitehead, {\it On equivalent sets of elements in free
groups}, Ann. of Math. \textbf{37} (1936), 782--800.



\end{thebibliography}
\end{document}